   \def\MR#1{}
\begin{document}

\newtheorem{theorem}{Theorem}
\newtheorem{lemma}[theorem]{Lemma}
\newtheorem{claim}[theorem]{Claim}
\newtheorem{cor}[theorem]{Corollary}
\newtheorem{proposition}[theorem]{Proposition}
\newtheorem{definition}{Definition}
\newtheorem{question}[theorem]{Question}
\newtheorem{remark}[theorem]{Remark}
\newcommand{\hh}{{{\mathrm h}}}

\numberwithin{equation}{section}
\numberwithin{theorem}{section}
\numberwithin{table}{section}

\def\sssum{\mathop{\sum\!\sum\!\sum}}
\def\ssum{\mathop{\sum\ldots \sum}}
\def\dsum{\mathop{\sum \sum}}
\def\iint{\mathop{\int\ldots \int}}

\def\squareforqed{\hbox{\rlap{$\sqcap$}$\sqcup$}}
\def\qed{\ifmmode\squareforqed\else{\unskip\nobreak\hfil
\penalty50\hskip1em\null\nobreak\hfil\squareforqed
\parfillskip=0pt\finalhyphendemerits=0\endgraf}\fi}

\newfont{\teneufm}{eufm10}
\newfont{\seveneufm}{eufm7}
\newfont{\fiveeufm}{eufm5}
%
%
\newfam\eufmfam
     \textfont\eufmfam=\teneufm
\scriptfont\eufmfam=\seveneufm
     \scriptscriptfont\eufmfam=\fiveeufm
%
%
\def\frak#1{{\fam\eufmfam\relax#1}}

\newcommand{\bflambda}{{\boldsymbol{\lambda}}}
\newcommand{\bfmu}{{\boldsymbol{\mu}}}
\newcommand{\bfxi}{{\boldsymbol{\xi}}}
\newcommand{\bfrho}{{\boldsymbol{\rho}}}

\def\fK{\mathfrak K}
\def\fT{\mathfrak{T}}

\def\fA{{\mathfrak A}}
\def\fB{{\mathfrak B}}
\def\fC{{\mathfrak C}}

\def\E{\mathsf {E}}

\def \balpha{\bm{\alpha}}
\def \bbeta{\bm{\beta}}
\def \bgamma{\bm{\gamma}}
\def \blambda{\bm{\lambda}}
\def \bchi{\bm{\chi}}
\def \bphi{\bm{\varphi}}
\def \bpsi{\bm{\psi}}

\def\eqref#1{(\ref{#1})}

\def\vec#1{\mathbf{#1}}


\def\cA{{\mathcal A}}
\def\cB{{\mathcal B}}
\def\cC{{\mathcal C}}
\def\cD{{\mathcal D}}
\def\cE{{\mathcal E}}
\def\cF{{\mathcal F}}
\def\cG{{\mathcal G}}
\def\cH{{\mathcal H}}
\def\cI{{\mathcal I}}
\def\cJ{{\mathcal J}}
\def\cK{{\mathcal K}}
\def\cL{{\mathcal L}}
\def\cM{{\mathcal M}}
\def\cN{{\mathcal N}}
\def\cO{{\mathcal O}}
\def\cP{{\mathcal P}}
\def\cQ{{\mathcal Q}}
\def\cR{{\mathcal R}}
\def\cS{{\mathcal S}}
\def\cT{{\mathcal T}}
\def\cU{{\mathcal U}}
\def\cV{{\mathcal V}}
\def\cW{{\mathcal W}}
\def\cX{{\mathcal X}}
\def\cY{{\mathcal Y}}
\def\cZ{{\mathcal Z}}
\newcommand{\rmod}[1]{\: \mbox{mod} \: #1}

\def\cg{{\mathcal g}}

\def\e{{\mathbf{\,e}}}
\def\ep{{\mathbf{\,e}}_p}
\def\eq{{\mathbf{\,e}}_q}

\def\Tr{{\mathrm{Tr}}}
\def\Nm{{\mathrm{Nm}}}

\def\rE{{\mathrm{E}}}
\def\rT{{\mathrm{T}}}

 \def\SS{{\mathbf{S}}}

\def\lcm{{\mathrm{lcm}}}

\def\t{\tilde}
\def\ov{\overline}

\def\({\left(}
\def\){\right)}
\def\l|{\left|}
\def\r|{\right|}
\def\fl#1{\left\lfloor#1\right\rfloor}
\def\rf#1{\left\lceil#1\right\rceil}
\def\flq#1{\langle #1 \rangle_q}

\def\mand{\qquad \mbox{and} \qquad}

\newcommand{\commIg}[1]{\marginpar{%
\begin{color}{magenta}
\vskip-\baselineskip 
\raggedright\footnotesize
\itshape\hrule \smallskip Ig: #1\par\smallskip\hrule\end{color}}}

\newcommand{\commSi}[1]{\marginpar{%
\begin{color}{blue}
\vskip-\baselineskip 
\raggedright\footnotesize
\itshape\hrule \smallskip Si: #1\par\smallskip\hrule\end{color}}}




\hyphenation{re-pub-lished}

\mathsurround=1pt

\def\bfdefault{b}
\overfullrule=5pt

\def \F{{\mathbb F}}
\def \K{{\mathbb K}}
\def \Z{{\mathbb Z}}
\def \Q{{\mathbb Q}}
\def \R{{\mathbb R}}
\def \C{{\\mathbb C}}
\def\Fp{\F_p}
\def \fp{\Fp^*}

\def\Smn{S_{k,\ell,q}(m,n)}

\def\Kmn{\cK_p(m,n)}
\def\psmn{\psi_p(m,n)}

\def\SM{\cS_{k,\ell,q}(\cM)}
\def\SMN{\cS_{k,\ell,q}(\cM,\cN)}
\def\SAMN{\cS_{k,\ell,q}(\cA;\cM,\cN)}
\def\SABMN{\cS_{k,\ell,q}(\cA,\cB;\cM,\cN)}

\def\SIJq{\cS_{k,\ell,q}(\cI,\cJ)}
\def\SAJq{\cS_{k,\ell,q}(\cA;\cJ)}
\def\SABJq{\cS_{k,\ell,q}(\cA, \cB;\cJ)}

\def\sM{\cS_{k,q}^*(\cM)}
\def\sMN{\cS_{k,q}^*(\cM,\cN)}
\def\sAMN{\cS_{k,q}^*(\cA;\cM,\cN)}
\def\sABMN{\cS_{k,q}^*(\cA,\cB;\cM,\cN)}

\def\sIJq{\cS_{k,q}^*(\cI,\cJ)}
\def\sAJq{\cS_{k,q}^*(\cA;\cJ)}
\def\sABJq{\cS_{k,q}^*(\cA, \cB;\cJ)}
\def\sABJp{\cS_{k,p}^*(\cA, \cB;\cJ)}

 \def \xbar{\overline x}

\author[S.  Macourt] {Simon Macourt}
\address{Department of Pure Mathematics, University of New South Wales,
Sydney, NSW 2052, Australia}
\email{s.macourt@student.unsw.edu.au}

\title{Visible Points on Exponential Curves}

\begin{abstract} 
We provide two new bounds on the number of visible points on exponential curves modulo a prime for all choices of primes. We also provide one new bound on the number of visible points on exponential curves modulo a prime for almost all primes.
\end{abstract}
\keywords{exponential curve, visible points}
\subjclass[2010]{11A07, 11B30}

\maketitle

\section{Introduction}
\subsection{Set up}

We define
\begin{align*}
\cE_{a,g,p}= \{(x,y) : y=ag^x \pmod p \}
\end{align*}
to be the set of points on an exponential modular curve. Furthermore, for real $U,V$ we define $\cE_{a,g,p}(U,V)$ to be the set of points 
$$(x,y)\in \cE_{a,g,p}\bigcap [1,U]\times [1,V].$$
We also define the number of visible points $N_{a,g,p}(U,V)$ to be the number of points for which $(x,y)\in \cE_{a,g,p}(U,V)$ and $\gcd(x,y)=1$. Similarly we define $M_{a,g,p}(U,V)$ to be the number of points for which $(x,y) \in \cE_{a,g,p}(U,V)$.

\subsection{Main Results}
Here we improve previous results (see \eqref{eq:ChShp} below) by giving two bounds using results of Bourgain, Garaev, Konyagin and Shparlinski \cite[Theorems 23 and 24]{BGKS}.
\begin{theorem} \label{thm:vis1}
For $\gcd(a,p)=1$, any $g$ of multiplicative order $t$ modulo $p$ and $U,V \le t$,
\begin{align*}
N_{a,g,p}(U,V) = \frac{6}{\pi^2} \cdot \frac{UV}{p}+ O\left(\left( \frac{U^{3/4}V^{1/4}}{p^{1/8}} + U^{1/4}V^{5/8} \right)p^{o(1)}\right)
\end{align*}
for $U^3V\ge p^{5/2}$.
\end{theorem}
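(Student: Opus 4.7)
The plan is to peel off the coprimality condition by M\"obius inversion and then apply the exponential sum bounds of \cite[Theorems 23 and 24]{BGKS} to each resulting unrestricted count $M_{b,h,p}(X,Y)$. Setting $d=\gcd(x,y)$ and writing $(x,y)=(du,dv)$, one obtains
\begin{align*}
N_{a,g,p}(U,V)=\sum_{d\le\min(U,V)}\mu(d)\,\#\{(u,v):1\le u\le U/d,\ 1\le v\le V/d,\ dv\equiv ag^{du}\pmod p\}.
\end{align*}
Since $\min(U,V)\le t<p$, we have $\gcd(d,p)=1$, so the inner count equals $M_{a_d,g^d,p}(U/d,V/d)$ with $a_d\equiv ad^{-1}\pmod p$. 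Note that $g^d$ has multiplicative order $t/\gcd(d,t)\ge U/d$, so the hypotheses required for a valid $M$-count remain in force as $d$ varies.

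For each $M_{b,h,p}(X,Y)$ arising in this way, completing with additive characters yields
\begin{align*}
M_{b,h,p}(X,Y)=\frac{XY}{p}+\frac{1}{p}\sum_{\lambda=1}^{p-1}\bigg(\sum_{v\le Y}\ep(\lambda v)\bigg)\bigg(\sum_{u\le X}\ep(-\lambda b h^u)\bigg).
\end{align*}
Bounding the $v$-sum by $\min(Y,p/\lambda)$ and the exponential sum in $u$ by each of \cite[Theorems 23 and 24]{BGKS}, and then completing over $\lambda$ in the standard fashion, should produce two estimates of the form $|M_{b,h,p}(X,Y)-XY/p|\ll E_j(X,Y,p)\,p^{o(1)}$, one drawn from each of the cited theorems.

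Substituting back, the main terms collapse to $\tfrac{6}{\pi^2}\cdot\tfrac{UV}{p}$ via $\sum_d \mu(d)/d^2=6/\pi^2$, up to a tail of size $O(UV/(p\min(U,V)))$ which is absorbable. The $d$-sum of the error terms is treated by splitting at a parameter $D$: for $d\le D$ the bounds from the previous step apply with $(X,Y)=(U/d,V/d)$, and for $d>D$ one uses the trivial count $M_{b,h,p}(X,Y)\le X$. Optimising $D$ separately against each of $E_1$ and $E_2$ should produce the two summands $U^{3/4}V^{1/4}p^{-1/8}$ and $U^{1/4}V^{5/8}$ of the stated error.

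The main obstacle I foresee is the bookkeeping of this $d$-summation: checking exactly when each of the \cite{BGKS} bounds remains nontrivial as both $U/d$ and the order of $g^d$ shrink, and then choosing $D$ to balance the small-$d$ and large-$d$ contributions. The hypothesis $U^3V\ge p^{5/2}$ most plausibly arises as the threshold ensuring that the relevant \cite{BGKS} estimate beats the trivial bound throughout the range $d\le D$, so that the split genuinely produces the advertised power savings rather than collapsing back to the trivial error $UV/p$.
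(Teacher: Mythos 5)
Your opening move --- M\"obius inversion over $d=\gcd(x,y)$, reducing $N_{a,g,p}(U,V)$ to the counts $M_{a\bar d,g^d,p}(U/d,V/d)$ --- is exactly how the paper (following Chan--Shparlinski) begins, and extracting the main term via $\sum_d\mu(d)/d^2=6/\pi^2$ is also right. But the core of your plan misreads the cited input. Theorems 23 and 24 of \cite{BGKS} are not exponential sum estimates to be inserted after completing with additive characters: they are direct upper bounds on the counting function $M_{a,g,p}(\cI_1,\cI_2)$ itself, i.e.\ on the number of solutions of $y\equiv ag^x\pmod p$ with $x$ and $y$ in short intervals (the paper's Lemmas \ref{lem:MI1I2-Set} and \ref{lem:MI1I2-2}), proved by multiplicative-congruence arguments rather than by character sums. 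The completion-with-characters route you describe yields instead the asymptotic $M=XY/p+O(p^{1/2}(\log p)^2)$ (Lemma \ref{lem:MUV}), whose error term $p^{1/2}$ per value of $d$ is affordable only for small $d$. Moreover, the two error terms in the statement do not arise from using Theorems 23 and 24 separately --- Theorem 24 is reserved for Theorem \ref{thm:vis1-2} --- but from a single application of Theorem 23 in a middle range of $d$, balanced on one side against the character-sum asymptotic and on the other against the tail.

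Second, your treatment of the tail is genuinely insufficient. Bounding $M_{b,h,p}(X,Y)$ trivially for $d>D$ gives $\sum_{d>D}\min(U,V)/d$, which is of order $\min(U,V)$ up to logarithms --- no better than the trivial bound on $N_{a,g,p}(U,V)$ itself and far larger than the claimed error terms. The paper instead uses the square-root bound $R_{a,g,p}(K;D)\ll D^{1/2}$ (Lemma \ref{lem:RKD}) to get $\Sigma_3\ll UV\Delta^{-3/2}$, which converges. The correct architecture therefore needs \emph{two} cut points $\delta\le\Delta$ and three different estimates: Lemma \ref{lem:MUV} for $d\le\delta$, contributing $UV/(p\delta)+\delta p^{1/2+o(1)}$; Lemma \ref{lem:MI1I2-Set} for $\delta\le d\le\Delta$, contributing $UV^{1/3+o(1)}\delta^{-1/3}p^{-1/3}+\Delta^{1/2}V^{1/2+o(1)}$; and Lemma \ref{lem:RKD} for $d\ge\Delta$. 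Balancing $\delta p^{1/2}$ against $UV^{1/3}\delta^{-1/3}p^{-1/3}$ gives $\delta=U^{3/4}V^{1/4}p^{-5/8}$ and the first error term, the requirement $\delta\ge1$ being precisely the hypothesis $U^3V\ge p^{5/2}$ (not, as you guessed, a nontriviality threshold for the BGKS bound); balancing $\Delta^{1/2}V^{1/2}$ against $UV\Delta^{-3/2}$ gives $\Delta=U^{1/2}V^{1/4}$ and the second term. With a single cut $D$ and only two estimates you cannot recover both terms of the stated error.
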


\begin{theorem} \label{thm:vis1-2}
For $\gcd(a,p)=1$, any $g$ of multiplicative order $t$ modulo $p$ and $U,V \le t$,
\begin{align*}
N_{a,g,p}(U,V) = \frac{6}{\pi^2} \cdot \frac{UV}{p}+ O\left(\left(  \frac{U^{6/7}V^{1/7}}{p^{1/28}} + U^{3/13}V^{7/13} \right)p^{o(1)}\right)
\end{align*}
for $U^6V\ge p^{15/4}$.
\end{theorem}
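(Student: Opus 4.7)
The plan is to adapt the proof of Theorem \ref{thm:vis1} by replacing the use of [BGKS, Theorem 23] with [BGKS, Theorem 24]. The key idea is M\"obius inversion, reducing the visible-point count $N_{a,g,p}$ to the total-point count $M_{a,g,p}$ on suitably scaled exponential curves. I expect [BGKS, Theorem 24] to supply an asymptotic
\begin{equation*}
M_{a,g,p}(U,V) = \frac{UV}{p} + O\!\left(\left(\frac{U^{6/7}V^{1/7}}{p^{1/28}} + U^{3/13}V^{7/13}\right)p^{o(1)}\right)
\end{equation*}
valid under $U^6 V \ge p^{15/4}$ and $U, V \le t$, which is exactly the input needed to mirror the approach used for Theorem \ref{thm:vis1}.

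First I would write the indicator of $\gcd(x,y)=1$ as $\sum_{d \mid \gcd(x,y)} \mu(d)$, interchange summations, and substitute $x = dx'$, $y = dy'$ to obtain
\begin{equation*}
N_{a,g,p}(U,V) = \sum_{d} \mu(d)\, M_{\bar d a,\, g^d,\, p}(U/d, V/d).
\end{equation*}
Since $U, V \le t < p$, only $d < p$ appears, so $d$ is automatically invertible modulo $p$. The element $g^d$ has order $t/\gcd(d,t) \ge t/d$, and so $U, V \le t$ immediately gives $U/d, V/d \le t/\gcd(d,t)$; thus the hypothesis of the $M$-asymptotic is preserved under rescaling for every $d$ in the sum.

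Next I would split this sum at a cutoff $D \asymp (U^6 V / p^{15/4})^{1/7}$, the largest $d$ for which the BGKS hypothesis $(U/d)^6(V/d) \ge p^{15/4}$ still holds. For $d \le D$ the main terms combine via $\sum_{d}\mu(d)/d^2 = 6/\pi^2$ to give $(6/\pi^2)\,UV/p + O(UV/(Dp))$, and the two error terms sum to
\begin{equation*}
p^{o(1)}\!\left(\frac{U^{6/7}V^{1/7}}{p^{1/28}} + U^{3/13}V^{7/13}\,D^{3/13}\right)
\end{equation*}
via $\sum_{d \le D} 1/d = O(\log D)$ and $\sum_{d \le D} d^{-10/13} \asymp D^{3/13}$. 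The tail $d > D$ would be controlled by the trivial estimate $M \le \min(U/d, V/d)$. Inserting the chosen $D$ recovers the claimed error, and the hypothesis $U^6 V \ge p^{15/4}$ emerges precisely as the requirement $D \ge 1$.

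The main obstacle is the tail estimate: verifying that $\sum_{d > D} \min(U/d, V/d)$ together with the main-term truncation $UV/(Dp)$ are both dominated by the two named error terms at the optimal $D$ demands careful bookkeeping of the competing exponents, and may require bounding the tail in two regimes depending on whether $U \le V$ or $V \le U$. Handling the multiplicative order of $g^d$ — a secondary concern — is resolved automatically by $U, V \le t$, as noted above.
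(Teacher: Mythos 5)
There is a genuine gap, and it lies at the very foundation of your plan. You assume that \cite[Theorem 24]{BGKS} supplies an \emph{asymptotic formula} $M_{a,g,p}(U,V)=UV/p+O(\cdots)$ with the error of the theorem's shape. It does not. That result (Lemma \ref{lem:MI1I2-2} in the paper) is a pure \emph{upper bound}, $M_{a,g,p}(\cI_1,\cI_2)<(h_1p^{-1/8}h_2^{-1/6}+1)h_2^{1/3+o(1)}$, with no main term at all; it cannot be used to extract $(6/\pi^2)UV/p$ from the M\"obius sum. The only asymptotic available for $M$ is Lemma \ref{lem:MUV}, whose error term $p^{1/2}(\log p)^2$ per value of $d$ is far too large to be summed over all $d$ up to your cutoff $D\asymp(U^6V/p^{15/4})^{1/7}$ (that would contribute $Dp^{1/2+o(1)}$, which generally swamps the claimed error). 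This is exactly why the paper uses a \emph{three}-way split $\Sigma_1+\Sigma_2+\Sigma_3$ at parameters $\delta\le\Delta$: the asymptotic of Lemma \ref{lem:MUV} only for $d\le\delta$ (with $\delta$ chosen small, namely $\delta=U^{6/7}V^{1/7}p^{-15/28}$, so that $\delta p^{1/2}$ is acceptable), the BGKS upper bound purely as an error estimate on the middle range $\delta\le d\le\Delta$, and a separate tool for the tail.

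Your tail treatment is also insufficient. The trivial bound gives $\sum_{d>D}\min(U/d,V/d)\ll\sqrt{UV}\log p$, and already at $U=V=p^{3/4}$ this is $p^{3/4+o(1)}$, which exceeds both claimed error terms ($p^{5/7+o(1)}$ and $p^{15/26+o(1)}$ there). The paper instead bounds $\Sigma_3\ll UV\Delta^{-3/2}$ via Lemma \ref{lem:RKD}, the square-root bound $R_{a,g,p}(K;D)\ll D^{1/2}$ on the number of solutions of $ad\equiv g^d\pmod p$ in an interval, and then balances $\Delta$ against the $\Delta^{2/3}V^{1/3}$ term coming from the middle range. Your secondary observations (invertibility of $d$, preservation of the order condition, $\sum_d\mu(d)/d^2=6/\pi^2$) are fine, but without the three-range structure and the $R$-based tail bound the argument does not close.
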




We  also give a new bound for almost all $p$, using \cite[Theorem 31]{BGKS2}.

\begin{theorem} \label{thm:almostp-2}
For sufficiently large positive integers $T, U$ and $V$ and for all but $o(p/\log p)$ primes $p \in [T,2T]$, for any $a$ with $\gcd(a,p)=1$, any $g$ of multiplicative order $t$ modulo $p$ and $U,V \le t$,
\begin{align*}
N_{a,g,p}(U,V) =  \frac{6}{\pi^2} \cdot \frac{UV}{p} + O\left(\left(\frac{U^{2/13}V^{11/13}}{p^{1/26}}+U^{7/22}V^{13/22}\right)p^{o(1)}\right)
\end{align*}
for $U^2V^{11} \ge p^{7}$.
\end{theorem}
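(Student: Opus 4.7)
The plan is the standard visible-points approach: apply Möbius inversion to reduce $N_{a,g,p}(U,V)$ to a signed sum of box point-counts $M$ on rescaled exponential curves, invoke Theorem~31 of~\cite{BGKS2} to bound each $M$ for almost all primes, and finally balance the resulting error terms via truncation. Concretely, starting from $[\gcd(x,y)=1] = \sum_{d \mid \gcd(x,y)} \mu(d)$ and substituting $x = dx'$, $y = dy'$, the congruence $dy' \equiv a g^{dx'} \pmod p$ becomes $y' \equiv (ad^{-1})(g^d)^{x'} \pmod p$, which is well-defined since $d \le \min(U,V) \le t < p$. Hence
\[
N_{a,g,p}(U,V) = \sum_{d \ge 1} \mu(d)\, M_{ad^{-1},\, g^d,\, p}(\lfloor U/d\rfloor,\lfloor V/d\rfloor),
\]
with $g^d$ of multiplicative order $t/\gcd(t,d)$, still compatible with the box sizes $U/d$, $V/d$.

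Next I will invoke Theorem~31 of~\cite{BGKS2}, combined with the standard completion method (the same Fourier-analytic passage from a character-sum bound to a point-count bound that underlies Theorems~\ref{thm:vis1} and~\ref{thm:vis1-2}), to assert that for all but $o(T/\log T)$ primes $p \in [T, 2T]$, uniformly in admissible parameters $(b,h)$ and $(X,Y)$ with $X, Y$ at most the order of $h$ and $X^{2}Y^{11} \ge p^{7}$,
\[
M_{b,h,p}(X, Y) = \frac{XY}{p} + O\left(\left(\frac{X^{2/13}Y^{11/13}}{p^{1/26}} + X^{7/22}Y^{13/22}\right)p^{o(1)}\right).
\]

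Finally I set $D = \lfloor (U^2 V^{11}/p^7)^{1/13} \rfloor$, the largest $d$ for which this $M$-estimate remains valid on the box $[1, U/d]\times[1, V/d]$; we have $D \ge 1$ by the hypothesis $U^2 V^{11} \ge p^7$. For $d \le D$ I substitute the $M$-estimate; for $d > D$ I use the trivial bound $M(U/d, V/d) \le V/d$. Completing $\sum_d \mu(d)/d^2 = 6/\pi^2$ produces the main term $6UV/(\pi^2 p)$ with truncation residue $O(UV/(Dp))$; the small-$d$ error sum, using $\sum_{d \le D} d^{-1} \ll \log D$ and $\sum_{d \le D} d^{-10/11} \ll D^{1/11}$, is
\[
\ll \left(\frac{U^{2/13}V^{11/13}}{p^{1/26}}\log D + U^{7/22}V^{13/22} D^{1/11}\right) p^{o(1)},
\]
and the large-$d$ tail contributes $\ll V \log p$. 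With the chosen $D$, each of these residuals should be absorbed into the claimed bound. The principal technical point is this absorption: an elementary but delicate balancing of the exponents $2/13$, $11/13$, $7/22$, $13/22$ showing that $UV/(Dp)$, $U^{7/22}V^{13/22}D^{1/11}$, and $V\log p$ are all dominated by $(U^{2/13}V^{11/13}/p^{1/26} + U^{7/22}V^{13/22})p^{o(1)}$ under the hypothesis $U^2V^{11} \ge p^7$.
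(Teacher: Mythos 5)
Your M\"obius-inversion opening matches the paper's, but the core of your argument rests on an estimate that is not available: you assert that Theorem~31 of \cite{BGKS2}, ``combined with the standard completion method,'' yields the per-box asymptotic
\[
M_{b,h,p}(X,Y)=\frac{XY}{p}+O\left(\left(\frac{X^{2/13}Y^{11/13}}{p^{1/26}}+X^{7/22}Y^{13/22}\right)p^{o(1)}\right).
\]
Theorem~31 of \cite{BGKS2} is a bound on the multiplicative energy $K_\nu(p,h,s)$ of an interval; via Cauchy--Schwarz it produces only an \emph{upper bound} on $M$ with no main term and a quite different exponent profile, namely Lemma~\ref{lem:MI1I2}: $M_{a,g,p}(\cI_1,\cI_2)\le n^{1/(2n)}h_1^{1/(2n)}\bigl(h_2^{1/2}+h_2^{1-1/(4n)}p^{-1/(4n)}\bigr)h_2^{o(1)}$. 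The exponents $2/13$, $11/13$, $1/26$, $7/22$, $13/22$ in the theorem are not the error exponents of any single $M$-estimate; they emerge only at the end, from balancing three \emph{different} estimates over three ranges of $d$ (Lemma~\ref{lem:MUV} for $d\le\delta$, Lemma~\ref{lem:MI1I2} with $n=3$ for $\delta\le d\le\Delta$, and Lemma~\ref{lem:RKD} for $d\ge\Delta$). By writing the final error term into the intermediate step you have in effect assumed what is to be proved, and no completion argument will deliver that intermediate statement.

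The second genuine gap is the tail. For $d>D$ you use the trivial bound $M(U/d,V/d)\le V/d$, whose sum contributes $\gg V$ (up to logarithms), and this is \emph{not} dominated by the claimed error term: for instance at $U=p^{5/6}$, $V=p^{2/3}$ (where $U^2V^{11}=p^{9}\ge p^{7}$) one has $V=p^{2/3}$ while $U^{7/22}V^{13/22}=p^{87/132}<p^{2/3}$ and $U^{2/13}V^{11/13}p^{-1/26}<p^{2/3}$. The paper avoids this by bounding the large-$d$ range with Lemma~\ref{lem:RKD} ($R_{a,g,p}(K;D)\ll D^{1/2}$), which gives $\Sigma_3\ll UV\Delta^{-3/2}$ --- genuinely smaller than $V$ once $\Delta$ is large --- and this forces a second cutoff $\Delta$ distinct from the main-term cutoff $\delta$. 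Relatedly, the stray factor $D^{1/11}$ you flag on the middle sum is a real loss rather than a removable technicality; in the paper's argument the middle-range bound has the form $U^{1/6}V^{1/2}d^{-2/3}+U^{1/6}V^{11/12}p^{-1/12}d^{-13/12}$, so one piece of the $d$-sum is controlled by the endpoint $\Delta$ and the other by $\delta$, and no extra power of the cutoff appears. You should restructure the proof around the three-range decomposition with Lemmas~\ref{lem:MUV}, \ref{lem:MI1I2} (with $n=3$) and \ref{lem:RKD}, and then choose $\delta=U^{2/13}V^{11/13}p^{-7/13}$ and $\Delta=U^{5/11}V^{3/11}$.
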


\subsection{Comparing Bounds}
We recall the result of Chan and Shparlinski \cite{ChShp}, for $\gcd(a,p)=1$ and any primitive root $g$ modulo $p$,
\begin{align}\label{eq:ChShp}
N_{a,g,p}(U,V)= \frac{6}{\pi^2} \cdot \frac{UV}{p}+O\left( \left( \frac{U^{1/2}V^{1/2}}{p^{1/4}}+\frac{U}{V^{1/35}}+\frac{V}{U^{1/35}} \right) p^{o(1)}\right)
\end{align}
for $1\le U,V \le p-1$ with $UV\ge p^{3/2}$.

We mention that Theorem \ref{thm:vis1} is stronger than \eqref{eq:ChShp} for all $U^3V\ge p^{5/2}$. We also mention that for $U=V$ all our bounds are stronger than the trivial bound 
\begin{align*}
N_{a,g,p}(U,V) \le \min(U,V)
\end{align*}
and of \eqref{eq:ChShp} over their valid regions. We also see  that Theorem \ref{thm:vis1} is always stronger than Theorems \ref{thm:vis1-2} and \ref{thm:almostp-2} for $U=V$ over the regions for which our new bounds are valid.

We notice that Theorem \ref{thm:almostp-2} is strongest for $U$ much larger than $V$. Here we give examples when each result is strongest. One can check that Theorem \ref{thm:vis1} is strongest for $U=V=p^{3/4}$, Theorem \ref{thm:vis1-2} is strongest for $U=p^{3/4}, V=p^{7/8}$ and Theorem \ref{thm:almostp-2} is strongest for $U=p^{5/6}, V=p^{2/3}$.

We also mention that one can get another bound for all $p$ using a result of \cite[Lemma 2.1]{ShpYau}. However, when compared to the bound from Theorem \ref{thm:vis1} one can see that it is trivial. Similarly, one can get another bound for almost all $p$ using Lemma \ref{lem:MI1I2} with $n=2$. Again, comparing this bound with Theorem \ref{thm:vis1} one can see that it is trivial.

\section{Set-up}

We recall the following results given in \cite{BGKS}.
\begin{lemma} \label{lem:MI1I2-Set}
Let $\gcd(a,p)=1$ and $g$ be of multiplicative order $t$ modulo $p$. Let $\cI_1$ and $\cI_2$ be two intervals consisting of $h_1$ and $h_2$ consecutive numbers respectively where $h_2\le t$. Then
\begin{align*}
M_{a,g,p}(\cI_1,\cI_2) < \left(\frac{h_1}{p^{1/3}h_2^{1/6}}+1\right)h_2^{1/2+o(1)}.
\end{align*}
\end{lemma}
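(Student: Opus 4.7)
The strategy is standard Fourier completion combined with a moment estimate for the exponential sum over the geometric progression $\cG = \{g^x : x \in \cI_1\} \subseteq \F_p^*$, using the sum-product machinery of \cite{BGKS}.

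By orthogonality of additive characters modulo $p$,
\begin{align*}
M_{a,g,p}(\cI_1,\cI_2) = \frac{1}{p}\sum_{\lambda=0}^{p-1}\Phi(\lambda)\,T(a\lambda),
\end{align*}
where $\Phi(\lambda) = \sum_{y \in \cI_2} e_p(\lambda y)$ and $T(\mu) = \sum_{x \in \cI_1} e_p(-\mu g^x)$. The term $\lambda = 0$ contributes the expected density $h_1 h_2/p$. For $\lambda \ne 0$ I would use the classical Fourier-coefficient bound $|\Phi(\lambda)| \ll \min(h_2,\, p/|\lambda|)$ to control the weights coming from the $y$-interval.

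The heart of the proof is a mean-value estimate for $T$. By Parseval, $\sum_\lambda |T(\lambda)|^{2k} = p \cdot E_k(\cG)$, where $E_k(\cG)$ is the $k$-fold additive energy of $\cG$, namely the number of tuples $(x_1,\dots,x_{2k}) \in \cI_1^{2k}$ with $g^{x_1}+\cdots+g^{x_k} \equiv g^{x_{k+1}}+\cdots+g^{x_{2k}} \pmod p$. Sharp bounds on $E_k(\cG)$ for long geometric progressions in $\F_p^*$ --- the main technical content of \cite{BGKS} --- come from sum-product estimates together with multiplicative-energy arguments for GPs. One then applies H\"older's inequality to combine this $L^{2k}$ bound for $T$ with the $L^{2k/(2k-1)}$ norm of $\Phi$ (computed by dyadic decomposition over the range of $|\lambda|$) and chooses $k$ to balance the two pieces. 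The diagonal contribution to $E_k(\cG)$ produces the $h_2^{1/2+o(1)}$ piece of the bound, while the off-diagonal contribution produces the $h_1 h_2^{1/3+o(1)}/p^{1/3}$ piece; together these are precisely the factorisation $(h_1 p^{-1/3} h_2^{-1/6}+1)\,h_2^{1/2+o(1)}$ of the statement, with the $p^{o(1)}$ absorbing the usual dyadic and divisor losses.

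The hard part is the additive-energy estimate $E_k(\cG)$ for long geometric progressions in $\F_p^*$: this is the substantive input from \cite{BGKS}, relying on sum-product inequalities in $\F_p$, and the numerical exponents in the final bound are fixed by the correct choice of the moment $k$ and the corresponding H\"older balance.
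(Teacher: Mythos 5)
First, a point of reference: the paper does not prove this lemma at all --- it is quoted verbatim from \cite[Theorem 23]{BGKS} --- and the only window onto the intended argument is the paper's proof of the sibling Lemma \ref{lem:MI1I2}, which the author states ``follows that of \cite[Theorem 23]{BGKS}''. That argument is \emph{multiplicative}, not additive: one lets $\cY\subseteq\cI_2$ be the set of $y$ actually satisfying $y\equiv ag^x\pmod p$, observes that every product $y_1\cdots y_n$ with $y_i\in\cY$ equals $a^ng^{x_1+\cdots+x_n}$ and hence lies in a set of at most $nh_1$ residues, applies the Cauchy inequality to get $|\cY|^{2n}/(nh_1)\le\#\{(y_i,z_i)\in\cI_2^{2n}: y_1\cdots y_n\equiv z_1\cdots z_n\pmod p\}$, and then invokes the bounds of \cite{BGKS} on multiplicative congruences with variables from short intervals. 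Your route --- additive Fourier completion in $y$ together with high moments of $T(\mu)=\sum_{x\in\cI_1}\ep(-\mu g^x)$ --- is genuinely different, and I do not believe it can be pushed through to the stated bound.

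Two concrete gaps. (i) The step you call the heart of the proof, a bound on the additive energy $E_k(\cG)$ of the geometric progression $\cG=\{g^x:x\in\cI_1\}$, is asserted to be ``the main technical content of \cite{BGKS}''. It is not: what \cite{BGKS} proves are bounds for the \emph{multiplicative} congruence $y_1\cdots y_\nu\equiv z_1\cdots z_\nu\pmod p$ with $y_i,z_i$ in a short \emph{interval}; additive energies of geometric progressions of essentially arbitrary length (here $h_1$ may be as large as $p$) are not available in the strength you would need, so your plan defers its crucial input to a result that does not exist in the cited source. (ii) Even granting the best conceivable energy bound $E_k(\cG)\ll h_1^{2k}/p+h_1^k$, the H\"older balance does not produce the claimed exponents: with $\sum_\lambda|\Phi(\lambda)|^{2k/(2k-1)}\ll p\,h_2^{1/(2k-1)}$ and $\sum_\lambda|T(\lambda)|^{2k}=pE_k(\cG)$ one obtains an error of order $h_1h_2^{1/(2k)}p^{-1/(2k)}+h_1^{1/2}h_2^{1/(2k)}$, which for no choice of $k$ equals $h_1h_2^{1/3}p^{-1/3}+h_2^{1/2}$; in particular the second term carries a factor $h_1^{1/2}$ that is absent from the lemma. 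Relatedly, your accounting of where the two terms come from cannot be right: the diagonal of $E_k(\cG)$ is $\asymp h_1^k$ and involves $h_2$ not at all, so it cannot ``produce the $h_2^{1/2+o(1)}$ piece''. The exponents $p^{-1/3}$ and $h_2^{1/3}$ in the target are the fingerprint of a multiplicative congruence count with three variables on each side drawn from $\cI_2$, combined with the $nh_1$ observation above; I see no way to recover them from additive completion.
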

\begin{lemma} \label{lem:MI1I2-2}
Let $\gcd(a,p)=1$ and $g$ be of multiplicative order $t$ modulo $p$. Let $\cI_1$ and $\cI_2$ be two intervals consisting of $h_1$ and $h_2$ consecutive numbers respectively where $h_2\le t$. Then
\begin{align*}
M_{a,g,p}(\cI_1,\cI_2) < \left(\frac{h_1}{p^{1/8}h_2^{1/6}}+1\right)h_2^{1/3+o(1)}.
\end{align*}
\end{lemma}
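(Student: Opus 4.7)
The plan is to prove this lemma by a Fourier completion argument that reduces the count of lattice points on the exponential curve inside $\cI_1 \times \cI_2$ to a bound on the pure exponential sum $T(\lambda) = \sum_{x\in\cI_1}\e_p(\lambda a g^x)$ supplied by [BGKS, Theorem 24]. The overall route is the same as for Lemma \ref{lem:MI1I2-Set}; only the input exponential-sum estimate is upgraded.

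First I would rewrite $M_{a,g,p}(\cI_1,\cI_2)$ as the number of $x\in\cI_1$ for which $ag^x \bmod p$ falls in $\cI_2 = \{N+1,\dots,N+h_2\}$, and expand the indicator of $\cI_2$ using additive characters modulo $p$:
\begin{equation*}
M_{a,g,p}(\cI_1,\cI_2) = \frac{h_1 h_2}{p} + \frac{1}{p}\sum_{\lambda=1}^{p-1}\hat{\mathbf 1}_{\cI_2}(-\lambda)\, T(\lambda),
\end{equation*}
where $\hat{\mathbf 1}_{\cI_2}(\lambda)=\sum_{z\in \cI_2}\e_p(\lambda z)$ satisfies the standard estimate $|\hat{\mathbf 1}_{\cI_2}(\lambda)|\ll \min(h_2,p/|\lambda|)$ (with $\lambda$ represented in $(-p/2,p/2]$). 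Dyadic summation over $|\lambda|$ together with $\sum_{\lambda\ne 0}|\hat{\mathbf 1}_{\cI_2}(\lambda)|/p = O(\log p)$ reduces the problem to controlling $\max_{\lambda\ne 0}|T(\lambda)|$ uniformly in $\lambda$.

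Next I would insert the exponential-sum bound from [BGKS, Theorem 24], which refines the estimate underlying Lemma \ref{lem:MI1I2-Set} by replacing the exponents $(1/3,1/2)$ with $(1/8,1/3)$ in the relevant factors. Feeding this into the Fourier expansion separates two regimes: a non-trivial contribution of order $h_1 h_2^{1/6+o(1)}/p^{1/8}$ coming from the small-$|\lambda|$ range, and an additive $h_2^{1/3+o(1)}$ coming from the saturated regime in which $h_1$ is small compared to $p^{1/8}h_2^{1/6}$. The trivial main term $h_1h_2/p$ is dominated in every case by the first of these, and repackaging gives the stated form $(h_1/(p^{1/8}h_2^{1/6})+1)\,h_2^{1/3+o(1)}$.

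The main obstacle is [BGKS, Theorem 24] itself, whose proof rests on the Bourgain--Glibichuk--Konyagin sum-product machinery combined with additive-multiplicative energy estimates for orbits of $g$ in $\F_p^*$. As the statement is recalled verbatim from [BGKS], my proposal reduces to importing this theorem as a black box and carefully translating its parameters into the present $(h_1,h_2,p)$ notation, verifying that the two terms match the displayed bound.
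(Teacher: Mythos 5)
The paper does not prove this lemma at all: it is recalled verbatim from \cite[Theorem~24]{BGKS} (the sentence ``We recall the following results given in \cite{BGKS}'' covers both Lemma~\ref{lem:MI1I2-Set} and Lemma~\ref{lem:MI1I2-2}), so the only ``proof'' in the paper is the citation. Your proposal is internally inconsistent on exactly this point: you describe \cite[Theorem~24]{BGKS} as a bound on the exponential sum $T(\lambda)=\sum_{x\in\cI_1}\e_p(\lambda ag^x)$ to be fed into a completion argument, but that theorem is already the counting statement $M_{a,g,p}(\cI_1,\cI_2)<\bigl(h_1p^{-1/8}h_2^{-1/6}+1\bigr)h_2^{1/3+o(1)}$ itself. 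If you import the actual theorem, the entire Fourier apparatus is superfluous; if you insist on an exponential-sum input, you need to exhibit one, and none with the required strength exists.

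More seriously, the Fourier route cannot produce this bound even in principle. Completion gives $M_{a,g,p}(\cI_1,\cI_2)=h_1h_2/p+O\bigl(\max_{\lambda\ne 0}|T(\lambda)|\log p\bigr)$, so to recover the saturated regime of the lemma --- namely $M_{a,g,p}(\cI_1,\cI_2)\le h_2^{1/3+o(1)}$ whenever $h_1\le p^{1/8}h_2^{1/6}$, a bound \emph{independent of} $h_1$ --- you would need $\max_{\lambda\ne 0}|T(\lambda)|\ll h_2^{1/3+o(1)}$ for an incomplete sum of length $h_1$. Taking, say, $h_1=p^{1/8}$ and $h_2$ bounded, this demands square-root-of-nothing cancellation ($|T(\lambda)|\le p^{o(1)}$ for a sum of length $p^{1/8}$), which is far beyond any known or plausible estimate; Bourgain-type bounds save only a factor $p^{-\delta}$ for tiny $\delta$. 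The true mechanism behind such results is multiplicative, not Fourier-analytic: one observes that the admissible $y\equiv ag^x\pmod p$ with $x\in\cI_1$ have $n$-fold products confined to a set of at most $nh_1$ residues, applies the Cauchy inequality, and bounds the number of solutions of $y_1\cdots y_n\equiv z_1\cdots z_n\pmod p$ with all variables in the short interval $\cI_2$. This is precisely the argument the paper spells out in the proof of Lemma~\ref{lem:MI1I2} (following \cite[Theorem~23]{BGKS}), and it, rather than completion, is what yields the $+1$ term with the exponent $1/3$.
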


We define $R_{a,g,p}(K;D)$ to be the number of solutions to the congruence 
\begin{align*}
ad \equiv g^d \pmod p, \qquad K+1\le d \le K+D.
\end{align*} 
We also recall the following lemmas given in \cite{ChShp}.
\begin{lemma} \label{lem:MUV}
For $\gcd(ag,p) =1$ and $U,V \le t$ where $t$ is the multiplicative order of $g$ modulo $p$,
\begin{align*}
M_{a,g,p}(U,V)= \frac{UV}{p}+O(p^{1/2}(\log p)^2).
\end{align*}
\end{lemma}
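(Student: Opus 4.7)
The plan is to detect the condition $y \equiv ag^x \pmod p$ by orthogonality of additive characters modulo $p$ and then isolate the main term. Writing $e_p(z) = \exp(2\pi i z/p)$, I would start from
\begin{equation*}
M_{a,g,p}(U,V) = \frac{1}{p}\sum_{x=1}^{U}\sum_{y=1}^{V}\sum_{\lambda=0}^{p-1} e_p\!\bigl(\lambda(y-ag^{x})\bigr),
\end{equation*}
split off the contribution $\lambda=0$, which is exactly $UV/p$, and denote the remaining error by
\begin{equation*}
E := \frac{1}{p}\sum_{\lambda=1}^{p-1}\Bigl(\sum_{y=1}^{V} e_p(\lambda y)\Bigr)\Bigl(\sum_{x=1}^{U} e_p(-\lambda a g^{x})\Bigr).
\end{equation*}

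Next I would bound the two inner sums separately. For the $y$-sum, the standard geometric series estimate gives $|\sum_{y=1}^{V} e_p(\lambda y)|\ll \min(V,\,p/|\lambda|_p)$, where $|\lambda|_p$ denotes the least residue of $\lambda$ in absolute value. Summing over $\lambda$ yields the familiar logarithmic bound
\begin{equation*}
\sum_{\lambda=1}^{p-1}\Bigl|\sum_{y=1}^{V} e_p(\lambda y)\Bigr| \ll p\log p.
\end{equation*}
For the $x$-sum, the key input is the classical bound for incomplete exponential sums over a cyclic subgroup: since $g$ has order $t$ and $U\le t$, for any $\lambda\not\equiv 0 \pmod p$,
\begin{equation*}
\Bigl|\sum_{x=1}^{U} e_p(-\lambda a g^{x})\Bigr|\ll p^{1/2}\log p,
\end{equation*}
which follows by completing the sum (introducing a factor $O(\log p)$) and applying the Weil-type estimate $|\sum_{x=0}^{t-1} e_p(\mu g^{x})|\le p^{1/2}$ for the full sum over the subgroup generated by $g$.

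Combining the two estimates gives
\begin{equation*}
|E| \le \frac{p^{1/2}\log p}{p}\sum_{\lambda=1}^{p-1}\Bigl|\sum_{y=1}^{V} e_p(\lambda y)\Bigr| \ll p^{1/2}(\log p)^{2},
\end{equation*}
which is exactly the claimed error. The only nontrivial ingredient is the uniform $p^{1/2}\log p$ bound for the incomplete exponential sum over the orbit $\{g^{x}\}_{x=1}^{U}$; the rest is routine orthogonality plus the elementary $L^{1}$ bound on geometric sums. In particular, the hypothesis $U\le t$ is used precisely so that the completing argument goes through without the trivial bound $U$ dominating.
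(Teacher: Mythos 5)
Your argument is correct and is essentially the proof of this lemma in the cited source \cite{ChShp}: detect the congruence by orthogonality of additive characters, use the $L^1$ bound $\ll p\log p$ for the geometric sums over $y$, and use the uniform bound $\ll p^{1/2}\log p$ for the incomplete sum of the exponential function over $x$. The one point worth making explicit is that the completion step for the $x$-sum requires the $p^{1/2}$ estimate not only for the full subgroup sum $\sum_{x=0}^{t-1}e_p(\mu g^{x})$ but for all its twists $\sum_{x=0}^{t-1}e_p(\mu g^{x})e_t(bx)$; these are again Gauss-type sums over the subgroup generated by $g$ (the twist being a multiplicative character restricted to that subgroup) and are likewise bounded by $p^{1/2}$, so the argument goes through as you describe.
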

\begin{lemma} \label{lem:RKD}
For $\gcd(ag,p)=1$ and $D\le p$, we have
\begin{align*}
R_{a,g,p}(K;D) \ll D^{1/2}.
\end{align*}
\end{lemma}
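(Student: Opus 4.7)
The plan is to bound $R=R_{a,g,p}(K;D)$ via a pairing argument, counting ordered pairs of solutions by their difference. Suppose $d_1,d_2\in[K+1,K+D]$ are two distinct solutions with $d_1<d_2$, and set $h=d_2-d_1\in\{1,\dots,D-1\}$. The defining congruences give
\begin{equation*}
a d_1\equiv g^{d_1}\pmod p,\qquad a d_2\equiv g^{d_2}=g^{h}g^{d_1}\pmod p.
\end{equation*}
Dividing the second by the first (which is legitimate because $\gcd(ad_1,p)=\gcd(g^{d_1},p)=1$, and $p\nmid d_1$ since $d_1\le K+D$; in fact if $p\mid d_1$ we would get $g^{d_1}\equiv 0\pmod p$, a contradiction), we obtain $d_2\equiv g^{h}d_1\pmod p$, i.e.
\begin{equation*}
d_1(g^{h}-1)\equiv h\pmod p.
\end{equation*}

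First I would handle the case $g^{h}\equiv 1\pmod p$, i.e.\ $t\mid h$. The congruence then forces $h\equiv 0\pmod p$, but $1\le h\le D-1\le p-1$, which is impossible. So no pair of solutions can have such a difference. When $g^{h}\not\equiv 1\pmod p$, the element $g^{h}-1$ is invertible modulo $p$, so $d_1$ is uniquely determined modulo $p$; since the admissible range $K+1\le d_1\le K+D-h$ has length $\le D\le p$, there is at most one value of $d_1\in[K+1,K+D-h]$ realising this congruence, and hence at most one pair $(d_1,d_2)$ with prescribed gap $h$.

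Summing over $h\in\{1,\dots,D-1\}$ bounds the total number of ordered pairs of distinct solutions by $D-1$. Since this number equals $\binom{R}{2}=R(R-1)/2$, we deduce $R(R-1)\le 2(D-1)$, hence $R\ll D^{1/2}$, as required.

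There is no real obstacle here beyond making sure the easy case $g^{h}\equiv 1\pmod p$ does not contribute (handled by $D\le p$), and verifying that the unique residue class modulo $p$ determined by the congruence produces at most one point in the interval (again using $D\le p$). The hypothesis $D\le p$ in the statement is used at exactly these two places.
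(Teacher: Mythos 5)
Your argument is correct: the difference congruence $d_1(g^h-1)\equiv h\pmod p$, the observation that $g^h\equiv 1$ forces the impossible $p\mid h$, and the count of at most one pair per gap $h$ together give $\binom{R}{2}\le D-1$ and hence $R\ll D^{1/2}$. The paper itself does not prove this lemma but merely quotes it from Chan and Shparlinski \cite{ChShp}, and your pairing argument is essentially the standard proof given there, so there is nothing further to reconcile.
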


We define $K_\nu(p,h,s)$ to be the number of solutions of
\begin{align*}
(x_1+s)\dots (x_\nu +s) \equiv (y_1+s)\dots (y_\nu +s) \nequiv 0 \pmod p
\end{align*} 
where $x_i,y_i \in [1,h]$ for $i=1, \dots, \nu$ and $s \in \F_p$. We recall the following result from  \cite[Theorem 31]{BGKS2}.
\begin{lemma} \label{lem:Kphs}
Let $\nu\ge 1$ be a fixed integer. For sufficiently large positive integers $T>h\ge 3$,
\begin{align*}
K_\nu(p,h,s) \le \left(h^\nu+h^{2\nu -1/2}T^{-1/2}\right)\exp\left(O\left(\frac{\log h}{\log \log h}\right)\right)
\end{align*}
for all $s\in \F_p$ and all but $o(T/\log^2T)$ primes $p\le T$.
\end{lemma}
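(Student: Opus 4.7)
The plan is to lift the modular congruence to an integer identity and then convert an average bound on $K_\nu(p,h,s)$ over primes $p\le T$ into a bound for almost all primes, uniformly in $s$, by way of a second-moment (Chebyshev) inequality. For each fixed $s$, choose the integer representative $s\in[0,p)$. For a tuple $(\vec x,\vec y)\in[1,h]^{2\nu}$ introduce
\[
L=L(\vec x,\vec y;s):=\prod_{i=1}^\nu(x_i+s)-\prod_{i=1}^\nu(y_i+s)\in\Z,\qquad |L|\le(h+T)^\nu,
\]
so that the defining congruence of $K_\nu(p,h,s)$ amounts to $p\mid L$ together with $p\nmid\prod(x_i+s)$. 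Split the tuples according to whether $L=0$ (``integer diagonal'') or $L\ne 0$ (``modular off-diagonal'').

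The integer diagonal is counted by $N_0(s):=\sum_n r_s(n)^2$, where $r_s(n)$ is the number of ordered factorisations $n=\prod(x_i+s)$ with $x_i\in[1,h]$. The $\nu$-fold divisor estimate $r_s(n)\le d_\nu(n)\le \exp(O(\log h/\log\log h))$ together with $\sum_n r_s(n)=h^\nu$ gives $N_0(s)\le h^\nu\exp(O(\log h/\log\log h))$, uniformly in $s$. On the off-diagonal, each fixed tuple with $L\ne 0$ is counted by at most $\omega(|L|)=O(\log(hT))$ primes $p\le T$, and therefore
\[
\sum_{p\le T}K_\nu(p,h,s)\le N_0(s)\pi(T)+O\bigl(h^{2\nu}\log(hT)\bigr),
\]
which is already of the desired shape on average.

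To upgrade this to ``almost all $p$'' uniformly in $s$, I pass to the variance. Writing $K_\nu(p,h,s)=N_0(s)+J(p,s)$ with $J(p,s)$ the off-diagonal count, the variance is controlled by
\[
\sum_{p\le T}J(p,s)^2=\sum_{\substack{(\vec x_1,\vec y_1,\vec x_2,\vec y_2)\\ L_1,L_2\ne 0}}\omega_{\le T}\bigl(\gcd(L_1,L_2)\bigr),
\]
where $L_j=L(\vec x_j,\vec y_j;s)$. This double sum is estimated by a careful combinatorial count of integer quadruples of shifted-product differences sharing a common prime divisor at most $T$, with all divisor-function losses absorbed into the $\exp(O(\log h/\log\log h))$ factor. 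Applying Chebyshev's inequality with the threshold $X=(h^\nu+h^{2\nu-1/2}T^{-1/2})\exp(O(\log h/\log\log h))$ then produces an exceptional set of primes of size $o(T/\log^2 T)$, and the uniformity in $s$ is inherited from the uniformity of the variance bound.

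The main obstacle will be the variance estimate itself, namely obtaining the sharp uniform bound on $\sum\omega_{\le T}(\gcd(L_1,L_2))$ over quadruples of non-zero shifted-product differences. The square-root factor $T^{-1/2}$ in the final bound is precisely what Chebyshev extracts from this variance, so delivering the correct power of $h$ in the claimed estimate rests on an optimal combinatorial analysis of how integer differences $\prod(x_i+s)-\prod(y_i+s)$ can collide modulo small primes, with the divisor-function savings absorbed into the $\exp(O(\log h/\log\log h))$ error throughout.
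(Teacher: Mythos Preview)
The paper does not prove this lemma: it is simply quoted as \cite[Theorem~31]{BGKS2}, so there is no in-paper argument to compare against. That said, your proposal has two genuine gaps.

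First, the entire content of the proof is the variance bound
\[
\sum_{p\le T}J(p,s)^2\;=\;\sum_{\substack{(\vec x_1,\vec y_1,\vec x_2,\vec y_2)\\ L_1,L_2\ne 0}}\omega_{\le T}\bigl(\gcd(L_1,L_2)\bigr)\;\ll\; h^{4\nu-1+o(1)},
\]
which you name as ``the main obstacle'' but do not establish. The first-moment estimate you do give only yields, via Markov, an almost-all threshold of size $h^\nu+h^{2\nu}T^{-1}(\log T)^{O(1)}$; extracting the claimed $h^{2\nu-1/2}T^{-1/2}$ genuinely requires the second moment, and controlling $\omega_{\le T}(\gcd(L_1,L_2))$ over all quadruples of shifted-product differences is precisely the substantive arithmetic input of the BGKS argument. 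As written, the proposal is an outline with the proof-carrying step missing.

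Second, the uniformity in $s$ does \emph{not} follow from ``the uniformity of the variance bound''. A bound $\sum_{p\le T}J(p,s)^2\le V$ that is the same for every $s$ gives, by Chebyshev, for each fixed $s$ an exceptional set $E_s=\{p:J(p,s)>X\}$ of size at most $V/X^2$; but the lemma demands a \emph{single} exceptional set of primes that works simultaneously for every $s\in\F_p$. Since $s$ runs over roughly $T$ values as $p$ ranges up to $T$, the union $\bigcup_s E_s$ could a priori have size of order $T\cdot V/X^2$, destroying the conclusion. This uniformity is a real additional difficulty that has to be built into the argument (for instance by controlling $\sum_{p\le T}\max_{s}J(p,s)$ or by a structure that is intrinsically uniform in the shift), not recovered at the end by a union bound.
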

%
%
We now give the following result. Our proof follows that of \cite[Theorem 23]{BGKS}.
\begin{lemma} \label{lem:MI1I2}
Let $h_1, h_2$ and $T$ be sufficiently large fixed positive integers with $p \in [T,2T]$ and $3\le h_2 \le T$ for some fixed $n\ge 2$, let $g$ be of multiplicative order $t$ modulo $p$ and $\cI_1$ and $\cI_2$ be two intervals consisting of $h_1$ and $h_2$ consecutive integers respectively with $h_1, h_2 \le t$. Then
\begin{align*}
M_{a,g,p}(\cI_1,\cI_2) \le n^{1/(2n)}h_1^{1/(2n)}\left(h_2^{1/2}+h_2^{1-1/(4n)}p^{-1/(4n)}\right)h_2^{o(1)}
\end{align*}
for all but $o(p/\log^2 p)$ primes $p$ with $\gcd(a,p)=1$.
\end{lemma}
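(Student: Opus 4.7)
The plan is to follow the structure of \cite[Theorem 23]{BGKS} as advertised, but replace the role of the exponential/multiplicative energy bound used there by the moment bound $K_n(p,h_2,s)$ supplied by Lemma \ref{lem:Kphs}. The key observation is that the relation $y\equiv ag^x\pmod p$ converts equal sums of $x$-coordinates into equal products of $y$-coordinates, so a sum--product switch turns $M_{a,g,p}(\cI_1,\cI_2)^{2n}$ into a quantity controlled by $K_n(p,h_2,s)$.

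Concretely, write $\cI_2=[L+1,L+h_2]$ and let $A\subseteq\cI_1$ be the set of $x\in\cI_1$ for which $ag^x\bmod p$ lies in $\cI_2$, so $|A|=M_{a,g,p}(\cI_1,\cI_2)=:M$. Since $h_1\le t$, the map $x\mapsto ag^x\bmod p$ is injective on $A$, and gives a well-defined injection $x\mapsto y(x)\in\cI_2$. Define
\begin{equation*}
\Phi=\#\{(x_1,\dots,x_n,x_1',\dots,x_n')\in A^{2n}:\ x_1+\dots+x_n=x_1'+\dots+x_n'\}.
\end{equation*}
The sumset $nA\subseteq[nK+n,nK+nh_1]$ has at most $nh_1$ elements, so by Cauchy--Schwarz on the level sets of the sum one obtains the lower bound $\Phi\ge M^{2n}/(nh_1)$.

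For the upper bound, observe that every tuple counted by $\Phi$ yields $y_i:=y(x_i),\,y_i':=y(x_i')\in\cI_2$ with
\begin{equation*}
y_1\cdots y_n\equiv a^n g^{x_1+\dots+x_n}\equiv a^n g^{x_1'+\dots+x_n'}\equiv y_1'\cdots y_n'\pmod p,
\end{equation*}
and (by injectivity of $x\mapsto y(x)$) the resulting tuple $(y_1,\dots,y_n,y_1',\dots,y_n')\in\cI_2^{2n}$ determines the original $x$-tuple uniquely. After the substitution $y_i=L+u_i$, $y_i'=L+u_i'$ with $u_i,u_i'\in[1,h_2]$, this is exactly a solution counted by $K_n(p,h_2,L)$ (nonvanishing of the product holds since $\cI_2$ avoids $0\pmod p$). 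Hence $\Phi\le K_n(p,h_2,L)$.

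Combining the two bounds with Lemma \ref{lem:Kphs} (applied with $\nu=n$ and $s=L$), which holds outside an exceptional set of $o(T/\log^2 T)=o(p/\log^2 p)$ primes $p\in[T,2T]$, gives
\begin{equation*}
M^{2n}\le nh_1\bigl(h_2^{\,n}+h_2^{\,2n-1/2}T^{-1/2}\bigr)h_2^{o(1)},
\end{equation*}
and taking $2n$-th roots, using $T\asymp p$ and the elementary inequality $(a+b)^{1/(2n)}\le a^{1/(2n)}+b^{1/(2n)}$, yields the claimed bound. The only delicate point is to be sure that the exceptional set of primes supplied by Lemma \ref{lem:Kphs} does not depend on the shift $s=L$ in a way that blows up over the possible choices of $\cI_2$; this is already built into the statement of Lemma \ref{lem:Kphs}, which allows all $s\in\F_p$ simultaneously outside the same negligible set, so no further uniformization is required.
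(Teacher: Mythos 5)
Your argument is correct and is essentially the paper's own proof: both rest on the observation that the relevant $n$-fold sums (equivalently, the products $a^ng^{x_1+\dots+x_n}$) take at most $nh_1$ values, followed by Cauchy--Schwarz and an application of Lemma \ref{lem:Kphs} to the resulting second moment over $\cI_2^{2n}$. The only cosmetic difference is that you phrase the energy additively on the $x$-side while the paper works multiplicatively with $T(\lambda)$ on the $y$-side; these are equivalent under $y\equiv ag^x\pmod p$.
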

\begin{proof}
We recall $M_{a,g,p}(\cI_1,\cI_2)$ is the number of solutions to 
\begin{align}\label{eq:agx}
y \equiv ag^x \pmod p.
\end{align}
Define $\cY \subseteq \cI_2$ to be the values of $y$ which satisfy the congruence \eqref{eq:agx}. Let
\begin{align*}
T(\lambda)= \#\{(y_1,\dots, y_n) \in \cY^n : \lambda \equiv y_1\dots y_n \pmod p\}.
\end{align*}
Therefore,
\begin{align*}
\# \{\lambda: T(\lambda) >0\} \le nh_1
\end{align*}
since
\begin{align*}
\lambda \equiv y_1\dots y_n \equiv a^ng^{x_1+\dots+x_n}.
\end{align*}
By the Cauchy inequality
\begin{align*}
\sum_{\lambda \in \F^*_p} T(\lambda)^2 \ge \frac{1}{nh_1}\left(\sum_{\lambda \in \F^*_p} T(\lambda) \right)^2=\frac{|\cY|^{2n}}{nh_1}.
\end{align*}
Clearly,
\begin{align*}
&\sum_{\lambda \in \F^*_p} T(\lambda)^2 \\
&\quad = \#\{(y_1,\dots,y_n,z_1,\dots,z_n) \in \cY^{2n} : y_1\dots y_n\equiv z_1\dots z_n \pmod p\}  \\
& \quad \le \#\{(y_1,\dots,y_n,z_1,\dots,z_n) \in \cI_2^{2n} : y_1\dots y_n\equiv z_1\dots z_n \pmod p\}.
\end{align*}
Hence, by Lemma \ref{lem:Kphs}
\begin{align*}
 \sum_{\lambda \in \F^*_p} T(\lambda)^2 \le (h_2^{n}+h_2^{2n-1/2}p^{-1/2})h^{o(1)}
\end{align*}
for all but $o(T/\log^2 T)$ primes $p$. Therefore,
\begin{align*}
\frac{|\cY|^{2n}}{nh_1} \le (h_2^{n}+h_2^{2n-1/2}p^{-1/2})h^{o(1)}.
\end{align*}
Rearranging, we complete the proof.
\end{proof}

\section{Proofs of main results}
\subsection{Proof of Theorem \ref{thm:vis1}}
Our proof follows that of \cite[Theorem 1]{ChShp}, however we use Lemma \ref{lem:MI1I2-Set} in place of Lemma 3 from \cite{ChShp}.

From \cite[Equation 3]{ChShp}, we have
\begin{align*}
N_{a,g,p}(U,V) = \Sigma_1+\Sigma_2+\Sigma_3
\end{align*}
where
\begin{equation} \label{eq:sigma123}
\begin{split}
\Sigma_1&= \sum_{\substack{\gcd(d,p)=1 \\ 1\le d\le \delta}}\mu (d)M_{a\bar{d},g^d,p}\left(\frac{U}{d}, \frac{V}{d}\right), \\
\Sigma_2&= \sum_{\substack{\gcd(d,p)=1 \\ \delta\le d\le \Delta}}\mu (d)M_{a\bar{d},g^d,p}\left(\frac{U}{d}, \frac{V}{d}\right), \\
\Sigma_3&=\sum_{\substack{\gcd(d,p)=1 \\ d\ge \Delta}}\mu (d)M_{a\bar{d},g^d,p}\left(\frac{U}{d}, \frac{V}{d}\right),
\end{split}
\end{equation}
for two real parameters $\delta$ and $\Delta$, which will be chosen later. From \cite{ChShp} we see
\begin{align*}
\Sigma_1=\frac{6}{\pi^2} \cdot \frac{UV}{p} + O\left(\frac{UV}{p\delta}+\delta p^{1/2}(\log p)^2\right)
\end{align*}
and
\begin{align*}
\Sigma_3 \ll UV\Delta^{-3/2},
\end{align*}
using Lemma \ref{lem:MUV} and Lemma \ref{lem:RKD} respectively. We now use Lemma \ref{lem:MI1I2}, combined with the triangle inequality, to obtain
\begin{align*}
\Sigma_2 &< \sum_{\substack{\gcd(d,p)=1 \\ \delta\le d\le \Delta}} \left(\frac{U}{p^{1/3}d^{5/6}V^{1/6}}+1\right)\left(\frac{V}{d}\right)^{1/2+o(1)}\\
&\ll \frac{UV^{1/3+o(1)}}{\delta^{1/3}p^{1/3}}+\Delta^{1/2}V^{1/2+o(1)}.
\end{align*}
Therefore,
\begin{equation} \label{eq:NUV}
\begin{split}
&N_{a,g,p}(U,V) - \frac{6}{\pi^2} \cdot \frac{UV}{p} \\ 
&\quad \ll \frac{UV}{p\delta}+\delta p^{1/2+o(1)}+\frac{UV^{1/3+o(1)}}{\delta^{1/3}p^{1/3}}+\Delta^{1/2}V^{1/2+o(1)}+UV\Delta^{-3/2}.
\end{split}
\end{equation}
Now, 
\begin{align*}
\frac{UV}{p\delta} \le \frac{UV^{1/3+o(1)}}{\delta^{1/3}p^{1/3}}
\end{align*}
since $\delta \ge 1$ and $U,V\le p$. We balance the second and third terms in \eqref{eq:NUV} by selecting 
\begin{align} \label{eq:delta}
\delta= \frac{U^{3/4}V^{1/4}}{p^{5/8}}.
\end{align}
For $\delta\ge 1$ we need
\begin{align*}
U^{3}V\ge p^{5/2}.
\end{align*} 
We also balance the fourth and fifth terms in \eqref{eq:NUV} by selecting
\begin{align*}
\Delta = U^{1/2}V^{1/4+o(1)}.
\end{align*}
It is clear that $\delta \le \Delta$, therefore
\begin{align*}
N_{a,g,p}(U,V) - \frac{6}{\pi^2} \cdot \frac{UV}{p}\ll \left( \frac{U^{3/4}V^{1/4}}{p^{1/8}} + U^{1/4}V^{5/8} \right)p^{o(1)}.
\end{align*}
This completes the proof.

\subsection{Proof of Theorem \ref{thm:vis1-2}}
We repeat the above but use Lemma \ref{lem:MI1I2-2} for $\Sigma_2$. Hence,
\begin{align*}
\Sigma_2 &< \sum_{\substack{\gcd(d,p)=1 \\ \delta\le d\le \Delta}} \left(\frac{U}{p^{1/8}d^{5/6}V^{1/6}}+1\right)\left(\frac{V}{d}\right)^{1/3+o(1)}\\
&\ll \left(\frac{UV^{1/6}}{\delta^{1/6} p^{1/8}}+\Delta^{2/3}V^{1/3}\right)p^{o(1)}.
\end{align*}
Therefore,
\begin{equation} \label{eq:NUV-2}
\begin{split}
&N_{a,g,p}(U,V) - \frac{6}{\pi^2} \cdot \frac{UV}{p} \\ 
&\quad \ll \frac{UV}{p\delta}+\delta p^{1/2+o(1)}+\left(\frac{UV^{1/6}}{\delta^{1/6} p^{1/8}}+\Delta^{2/3}V^{1/3}\right)p^{o(1)}+UV\Delta^{-3/2}.
\end{split}
\end{equation}
Now, 
\begin{align*}
\frac{UV}{p\delta} \le \frac{UV^{1/6}}{\delta^{1/6}p^{1/8}}
\end{align*}
since $\delta \ge 1$ and $U,V\le p$. We balance the second and third terms in \eqref{eq:NUV-2} by selecting 
\begin{align} \label{eq:delta-2}
\delta= \frac{U^{6/7}V^{1/7}}{p^{15/28}}.
\end{align}
For $\delta\ge 1$ we need
\begin{align*}
U^{6}V\ge p^{15/4}.
\end{align*} 
We also balance the fourth and fifth terms in \eqref{eq:NUV} by selecting
\begin{align*}
\Delta = U^{6/13}V^{4/13}.
\end{align*}
It is clear that $\delta \le \Delta$, therefore
\begin{align*}
N_{a,g,p}(U,V) - \frac{6}{\pi^2} \cdot \frac{UV}{p}\ll \left( \frac{U^{6/7}V^{1/7}}{p^{1/28}} + U^{3/13}V^{7/13} \right)p^{o(1)}.
\end{align*}
This completes the proof.

\subsection{Proof of Theorem \ref{thm:almostp-2}}
We follow the proof of Theorem \ref{thm:vis1}  picking up after \eqref{eq:sigma123}. We now use Lemma \ref{lem:MI1I2}, taking $n=3$, to obtain
\begin{align*}
\Sigma_2 &\le \sum_{\substack{\gcd(d,p)=1 \\ \delta\le d\le \Delta}} 3^{1/6}\left(\frac{U}{d}\right)^{1/6}\left(\left(\frac{V}{d}\right)^{1/2} +\left(\frac{V}{d}\right)^{11/12}p^{-1/12}\right)p^{o(1)} \\
&\ll U^{1/6}\left(V^{11/12}p^{-1/12}\delta^{-1/12}+\Delta^{1/3} V^{1/2}\right)p^{o(1)}
\end{align*}
for all but $o(p/\log^2p)$ primes $p$. Therefore,
\begin{equation} \label{eq:NUV2-2}
\begin{split}
&N_{a,g,p}(U,V) - \frac{6}{\pi^2} \cdot \frac{UV}{p} \\ 
&\quad \ll \frac{UV}{p\delta}+\delta p^{1/2+o(1)} \\
&\qquad \qquad \qquad  +  U^{1/6}\left(V^{11/12}p^{-1/12}\delta^{-1/12}+\Delta^{1/3} V^{1/2}\right)p^{o(1)}+UV\Delta^{-3/2}.
\end{split}
\end{equation}
We note the first term is dominated by the third. We balance the second and third terms in \eqref{eq:NUV2-2} by selecting
\begin{align*}
\delta = \frac{U^{2/13}V^{11/13}}{p^{7/13}}.
\end{align*}
For $\delta \ge 1$ we need
\begin{align*}
U^2V^{11} \ge p^{7}.
\end{align*}
Similarly, we balance the third and fourth terms by selecting
\begin{align*}
\Delta= U^{5/11}V^{3/11}.
\end{align*}
Clearly $\delta \le \Delta$, therefore
\begin{align*}
N_{a,g,p}(U,V) - \frac{6}{\pi^2} \cdot \frac{UV}{p} \ll \left(U^{2/13}V^{11/13}p^{-1/26}+U^{7/22}V^{13/22}\right)p^{o(1)}
\end{align*}
for all but $o(T/\log^2T)$ primes $p\le T$. This concludes the proof.

\bibliographystyle{amsplain}

\end{document}